\documentclass[12pt]{article}

\usepackage{amssymb,amsthm,amsmath}
\usepackage[utf8]{inputenc}
\usepackage{color}
\usepackage{graphicx}
\usepackage{geometry}

\newcommand{\R}{\mathbb{R}}

\newcommand{\1}{\textbf{1}}
\newcommand{\dd}{\mathrm{d}}

\newcommand{\e}{\varepsilon}

\newtheorem{thm}{Theorem}

\theoremstyle{definition}
\newtheorem*{question*}{Question}


\begin{document}

\newgeometry{tmargin=2.5cm, bmargin=2.5cm, lmargin=2.5cm, rmargin=2.5cm}

\title{Reverse isoperimetric inequalities for parallel sets}

\author{Piotr Nayar
\thanks{The author was supported  by the National Science Centre, Poland, grant 2018/31/D/ST1/01355.}
}

\maketitle

\abstract{We consider the family of $r$-parallel sets in $\R^d$, that is sets of the form $A_r=A+rB_2^n$, where $B_2^n$ is the unit Euclidean ball and $A$ is arbitrary Borel set.
We show that the ratio between the upper surface area measure of an $r$-parallel set and its volume is upper bounded by $d/r$. Equality is achieved for $A$ being a single point. 

As a consequence of our main result we show that the Gaussian upper surface area measure of an $r$-parallel set is upper bounded by $18d \max(\sqrt{d},r^{-1})$. Moreover, we observe that there exists a $1$-parallel set with Gaussian surface area measure at least $0.28 \cdot d^{1/4}$.}

\vspace{1cm}

\noindent 2010 \emph{Mathematics Subject Classification}: Primary 52A40; Secondary 60G15.

\section{Introduction}

For sets $A,B$ is $\R^d$ we define their Minkowski sum $A+B=\{a+b: \ a \in A, b \in B\}$. Suppose $K$ is some compact convex set and let $r>0$. A set of the form $A_{r,K}=A+rK$ is called $(r,K)$-parallel. If $K=B_2^d$ is a unit Euclidean ball then $A_{r,K}$ will simply be called  $r$-parallel and will be denoted by $A_r$.  For a Borel set $A$ we shall write
\[
	|\partial A|_+ = \limsup_{\e \to 0^+} \frac{|A_\e| - |A|}{\e}, \qquad |\partial_K A|_+ = \limsup_{\e \to 0^+} \frac{|A_{\e,K}| - |A|}{\e},
\]  
where $|\cdot|$ stands for the Lebesgue measure. The quantity $|\partial A|_+$ is called the upper surface area measure of $A$.

  In \cite{J20} Jog considered reverse isoperimetric inequalities for parallel sets. He proved that for any compact set $A$ in $\R^d$ one has 
$
 	|\partial A_r| \leq d 2^{2d-1} \frac{|A_r|}{r}. 
$  
In this note we prove the following sharp result.

\begin{thm}\label{thm:1}
Let $A$ be a Borel set and let $K$ be compact and convex. Then  
\[
	|\partial_K A_{r,K}|_+ \leq \frac{d}{r} \cdot |A_{r,K}|,
\]
which is tight for $A=\{0\}$. In particular 
$
	|\partial A_{r}|_+ \leq \frac{d}{r} \cdot |A_{r}|,
$ .
\end{thm}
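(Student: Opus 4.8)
The plan is to differentiate the volume of the parallel set and reduce the whole statement to a single contraction inequality. First I would record the semigroup identity $(A_{r,K})_{\e,K} = A + rK + \e K = A + (r+\e)K = A_{r+\e,K}$, valid because $rK + \e K = (r+\e)K$ for convex $K$. Writing $V(t) := |A_{t,K}| = |A + tK|$, this turns the upper surface area into an upper right derivative, $|\partial_K A_{r,K}|_+ = \limsup_{\e\to 0^+}\frac{V(r+\e) - V(r)}{\e}$. The key reduction is the scaling identity obtained from the substitution $x = \frac{r+\e}{r}z$,
\[
V(r+\e) = |A + (r+\e)K| = \Big(\tfrac{r+\e}{r}\Big)^d\,\Big|\tfrac{r}{r+\e}A + rK\Big|.
\]
If one knows the \emph{contraction inequality} $|\theta A_0 + L| \le |A_0 + L|$ for every convex body $L$, every Borel set $A_0$ and every $\theta\in[0,1]$, then applying it with $\theta = \frac{r}{r+\e}$, $A_0 = A$ and $L = rK$ gives $V(r+\e)\le(\frac{r+\e}{r})^d V(r)$. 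Since $\lim_{\e\to 0^+}\e^{-1}\big((1+\e/r)^d-1\big)=d/r$, dividing by $\e$ and passing to the limit yields $|\partial_K A_{r,K}|_+\le\frac{d}{r}V(r)=\frac{d}{r}|A_{r,K}|$. The case $K=B_2^d$ is immediate, and $A=\{0\}$ gives $V(t)=t^d|K|$, for which all the inequalities above are equalities, establishing tightness.

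So everything rests on the contraction inequality, which is where the real work lies. I would first reduce it to finite $A_0$: since $A_0 + L = \bigcup_{F}(F+L)$ and $\theta A_0 + L = \bigcup_F(\theta F + L)$ are increasing unions over finite $F\subseteq A_0$, continuity of Lebesgue measure from below gives $|A_0 + L| = \sup_F|F+L|$ and likewise for $\theta A_0$, so it suffices to treat finite $A_0 = \{a_1,\dots,a_n\}$. By a further approximation (smooth strictly convex bodies with the origin in their interior are dense among convex bodies in the Hausdorff metric, and for a fixed finite point set the volumes $|\theta A_0 + L|$ and $|A_0 + L|$ depend continuously on $L$) I may also assume $L$ is strictly convex with $0$ in its interior, and I write $g=g_L$ for its gauge, so that $\theta A_0 + L = \{x : \min_i g(x - \theta a_i)\le 1\}$.

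The heart of the argument is a transport map. For $x\in\theta A_0 + L$ let $a^*(x)=a_i$ be the almost everywhere unique center minimizing $g(x-\theta a_i)$, let $R_i=\{x : a^*(x)=a_i\}$, and define $\Phi(x)=x+(1-\theta)a^*(x)$. On each $R_i$ the map $\Phi$ is the translation $x\mapsto x+(1-\theta)a_i$, hence measure preserving, and since $g(x-\theta a_i)\le 1$ there, $\Phi(x)=(x-\theta a_i)+a_i\in L+a_i\subseteq A_0+L$; thus $\Phi$ maps $\theta A_0+L$ into $A_0+L$. The main obstacle is to prove that $\Phi$ is injective almost everywhere, for then the images $\Phi(R_i)$ are essentially disjoint subsets of $A_0+L$ and
\[
|\theta A_0 + L| = \sum_i |R_i| = \sum_i |\Phi(R_i)| = \Big|\bigsqcup_i \Phi(R_i)\Big| \le |A_0 + L|.
\]

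To establish injectivity, suppose $\Phi(x_1)=\Phi(x_2)=:y$ with $a^*(x_1)=a_1\ne a_2=a^*(x_2)$. Setting $p=y-a_1$ and $w=a_1-a_2$, and substituting $x_j=y-(1-\theta)a_j$, the two minimality conditions $g(x_1-\theta a_1)\le g(x_1-\theta a_2)$ and $g(x_2-\theta a_2)\le g(x_2-\theta a_1)$ become $\phi(0)\le\phi(\theta)$ and $\phi(1)\le\phi(1-\theta)$ for the convex function $\phi(s):=g(p+sw)$. Combining these with $\phi(\theta)\le(1-\theta)\phi(0)+\theta\phi(1)$ and $\phi(1-\theta)\le\theta\phi(0)+(1-\theta)\phi(1)$ forces $\phi(0)=\phi(1)=\phi(\theta)$, so $\phi$ is constant on $[0,1]$; strict convexity of $L$ makes this impossible for $w\ne 0$. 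Hence coincidences occur only on a null set, $\Phi$ is injective almost everywhere, the displayed chain closes, and undoing the two approximations returns the inequality for arbitrary convex $L$ and arbitrary Borel $A_0$. The delicate point to get right is precisely this injectivity, since for bodies $L$ with flat faces the equidistant sets can have positive measure, which is exactly what the reduction to strictly convex $L$ is designed to avoid.
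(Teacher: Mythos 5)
Your outer layer coincides with the paper's: the identity $(A_{r,K})_{\e,K}=A_{r+\e,K}$, the scaling $|A+(r+\e)K|=\bigl(\tfrac{r+\e}{r}\bigr)^d\bigl|\tfrac{r}{r+\e}A+rK\bigr|$, and differentiation of $(1+\e/r)^d$ reduce everything to the contraction inequality $|\theta A_0+L|\le|A_0+L|$, which is exactly the monotonicity of $s\mapsto|sA_0+L|$ that the paper imports from Fradelizi--Marsiglietti/Stach\'o. The genuine difference is that you \emph{prove} this lemma rather than cite it, and your transport argument for finite $A_0$ and strictly convex $L$ is correct: the convexity computation forcing $\phi$ to be constant on $[0,1]$, and the contradiction with strict convexity, are both right. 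Two remarks there: (i) you do not need a.e.\ uniqueness of the minimizer at all --- break ties lexicographically; your injectivity argument uses only the two minimality inequalities, which hold for any choice of minimizer, so $\Phi$ is injective everywhere, not just a.e.; (ii) for finite $A_0$ the Hausdorff-continuity in $L$ that you assert is easy to make honest, e.g.\ with the approximants $L_\delta=(1-\delta)L+\delta B_2^d$ (strictly convex, and after translating so that $0\in L$ one has the sandwich $(1-\delta)L\subseteq L_\delta\subseteq L+\delta B_2^d$), using that $(A_0+L)\setminus(A_0+\mathrm{int}\,L)$ is contained in finitely many translates of $\partial L$, a null set.

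The gap is the very first reduction, from Borel $A_0$ to finite $A_0$. Continuity of Lebesgue measure from below applies to increasing \emph{sequences}, not to the uncountable directed family of all finite subsets: applied to the sets themselves, your reasoning would give $|A_0|=\sup_F|F|=0$. What continuity from below actually yields is $\sup_F|F+L|\ge|D+L|$ for any countable $D\subseteq A_0$, so your reduction silently requires a countable $D\subseteq A_0$ with $|D+L|=|A_0+L|$. This is true when $L$ is a convex body, but it is a geometric fact, not a measure-theoretic triviality. One fix: take $D$ dense in $A_0$; then $A_0+\mathrm{int}\,L=D+\mathrm{int}\,L$ (approximate the $A_0$-summand; the $L$-summand stays interior), so it suffices to prove $|(A_0+L)\setminus(A_0+\mathrm{int}\,L)|=0$. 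That difference is contained in the level set $\{x:\inf_{a\in A_0}g(x-a)=1\}$ of the gauge distance, and such level sets are Lebesgue-null: the gauge distance is Lipschitz with a directional derivative bounded away from $0$ (along the segment toward a near-minimizer) wherever it equals $1$, whereas at a.e.\ density point of a positive-measure level set the gradient of a Lipschitz function must vanish. Without an argument of this kind (or a Choquet-capacitability reduction to compact $A_0$ followed by the dense-subset argument), your proof as written establishes the theorem only for countable $A_0$. To be fair, the paper has the same issue in milder form --- its cited Proposition 2.1 concerns compact $A$, while Theorem 1 is stated for Borel $A$ --- so this extension needs care on any route.
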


In \cite{J20} also the Gaussian case was treated. The upper Gaussian surface area of a measurable set is defined as
\[
\gamma_d^+(\partial A) = \limsup_{\e \to 0^+} \frac{\gamma_d(A_\e) - \gamma_d(A)}{\e},
\]
where $\gamma_d$ stands for the standard Gaussian measure, that is measure with  density $(2\pi)^{d/2} e^{-|x|^2/2}$. Jog proved the inequality $\gamma_d^+(\partial A_r) \leq 2^{2d-1}d^2 3^d \max(1,r^{-1})$. We shall prove this bound with a better dependence on the dimension.

\begin{thm}\label{thm:2}
For any Borel set  $A$ we have 
\[
	\gamma_d^+(\partial A_r) \leq 18d \max(\sqrt{d},r^{-1}).
\]
\end{thm}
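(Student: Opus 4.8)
The plan is to reduce the Gaussian statement to the Euclidean (Lebesgue) result of Theorem~\ref{thm:1}. The key observation is that the Gaussian measure has a bounded density, so we can control the Gaussian perimeter of a parallel set by a combination of its Lebesgue surface area (where Theorem~\ref{thm:1} applies) and its Gaussian volume. Concretely, I would first try to establish a pointwise relation between the Gaussian and Lebesgue surface measures of a set $A_r$, using a coarea-type or layer-cake decomposition. The idea is that the Gaussian surface measure integrates the density $(2\pi)^{-d/2}e^{-|x|^2/2}$ over the boundary, while the Lebesgue surface measure integrates $1$, so on regions close to the origin the Gaussian measure is comparable to Lebesgue, while far from the origin the exponential decay saves us.

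Let me think about the right quantitative bound. A clean route is to split $\R^d$ into the ball $RB_2^d$ of radius $R = C\sqrt{d}$ (capturing most of the Gaussian mass) and its complement. On $RB_2^d$ the Gaussian density is at most $(2\pi)^{-d/2}$, so the Gaussian surface contribution there is bounded by $(2\pi)^{-d/2}$ times the full Lebesgue upper surface area $|\partial A_r|_+$, which by Theorem~\ref{thm:1} is at most $(d/r)|A_r|$. The factor $(2\pi)^{-d/2}|A_r|$ could be large, though, so this naive split needs refinement: I would instead localize the Lebesgue bound, writing the Gaussian surface integral as $\int$ over the boundary of the density, and estimate $|A_r|$ appearing in Theorem~\ref{thm:1} only over the relevant region. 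The main obstacle, I expect, is that Theorem~\ref{thm:1} is a global statement about $|\partial_K A_{r,K}|_+$ and does not immediately localize; to weight it by the Gaussian density I would need either a localized version of the parallel-set monotonicity argument, or I would integrate the bound over spherical shells and sum the resulting geometric-type series.

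A cleaner alternative, which I would pursue first, is to **tensorize**: approximate the Gaussian density by comparing $\gamma_d(A_\e) - \gamma_d(A)$ directly. Writing the Gaussian measure as $\gamma_d(B) = \int_B g$ with $g(x) = (2\pi)^{-d/2}e^{-|x|^2/2}$, the difference quotient becomes an integral of $g$ over the ``shell'' $A_{r,\e} \setminus A_r$ of width $\e$ around $\partial A_r$. Since $g(x) \le g(\text{nearest point})$ up to a controlled multiplicative factor when $\e$ is small (the density varies by a factor $e^{O(\e R)}$ across a shell of width $\e$ at radius $R$), in the limit $\e \to 0$ we obtain
\[
\gamma_d^+(\partial A_r) \le \int_{\partial A_r} g \, \dd \mathcal{H}^{d-1},
\]
and then I would bound this boundary integral by splitting $\partial A_r$ according to the distance $|x|$. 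On the shell $\{R \le |x| < R+1\}$ the Lebesgue surface area of $\partial A_r$ is controlled, via Theorem~\ref{thm:1} applied to $A_r \cap (R+1)B_2^d$ or an analogous localization, by something like $\frac{d}{r}$ times the shell volume, and the density contributes a factor $e^{-R^2/2}$ that makes the sum over $R = 0, 1, 2, \dots$ converge geometrically.

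The **final optimization** balances the two regimes $r \ge 1/\sqrt{d}$ and $r < 1/\sqrt{d}$, which is precisely what produces the $\max(\sqrt{d}, r^{-1})$ factor: when $r$ is large the dominant scale is $\sqrt{d}$ (the typical Gaussian radius), and when $r$ is small the $d/r$ from Theorem~\ref{thm:1} dominates. I expect the numerical constant $18$ to come out of the Gaussian shell sum together with the volume-of-ball estimate $|R B_2^d| = \frac{\pi^{d/2}R^d}{\Gamma(d/2+1)}$; the main technical care will be justifying the exchange of $\limsup$ and the shell decomposition, and ensuring the localized version of Theorem~\ref{thm:1} is legitimate. I would expect the hardest step to be this localization — turning the global volume bound $\frac{d}{r}|A_r|$ into a density-weighted estimate without losing dimensional factors — and I would handle it either by a direct monotone-rearrangement argument on each shell or by integrating the parallel-volume bound $\frac{\dd}{\dd\e}|A_\e| \le \frac{d}{\e}|A_\e|$ against the Gaussian weight.
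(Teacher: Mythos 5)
Your overall strategy (radial decomposition of the Gaussian measure, localization, then Theorem~\ref{thm:1}) is the same as the paper's, but two genuine gaps remain, and you have correctly flagged the first one as the crux without resolving it. Theorem~\ref{thm:1} applies only to sets that \emph{are} parallel sets, and $A_r \cap (R+1)B_2^d$ is not one, so your proposed localization is not legitimate as stated. The paper's fix is to intersect the original set $A$, not $A_r$, with a slightly enlarged ball: setting $A^t = A \cap (t+r+\e_0)B_2^d$, the set $(A^t)_r$ is again an $r$-parallel set, and one checks the identity $(A_{r+\e} \setminus A_r) \cap t B_2^d = ((A^t)_{r+\e} \setminus (A^t)_r) \cap t B_2^d$, valid because every point of $A_{r+\e} \cap tB_2^d$ lies within distance $r+\e_0$ of $A$. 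This is exactly the ``analogous localization'' you hoped for, and it is also why the paper uses the continuous layer-cake formula $\gamma_d(A) = (2\pi)^{-d/2}\int_0^\infty t e^{-t^2/2}\,|A \cap tB_2^d|\,\dd t$ rather than your boundary integral $\int_{\partial A_r} g \,\dd \mathcal{H}^{d-1}$; the latter would additionally require relating upper Minkowski content to Hausdorff measure of the boundary for an arbitrary Borel $A$, a justification the paper's volume-difference formulation avoids entirely.

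The second gap is more serious: your claim that ``balancing the two regimes'' produces the factor $\max(\sqrt{d},r^{-1})$ does not come out of your computation. After the (corrected) localization, the bound one obtains is of the form
\[
	\gamma_d^+(\partial A_r) \leq \frac{d}{r} \cdot \frac{1}{m_{d+1}} \int_0^\infty t e^{-t^2/2} (t+2r)^d \,\dd t \;\approx\; \frac{d}{r}\, e^{c\, r\sqrt{d}},
\]
which blows up \emph{exponentially} in $r\sqrt{d}$ when $r \gg 1/\sqrt{d}$: the shell sum does not converge to anything like $d^{3/2}$ for large $r$, because the ball $(t+2r)B_2^d$ enclosing $(A^t)_r$ is too large. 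The missing idea, which the paper uses and which cannot be dispensed with in this approach, is that every $r$-parallel set is also $r'$-parallel for any $0 < r' < r$ (since $A_r = (A_{r-r'})_{r'}$). Hence for $r$ above the threshold $r_\ast = \frac{d^{-1/2}}{2\sqrt{e}}$ one simply applies the small-$r$ bound $\gamma_d^+(\partial A_r) \leq \frac{2de}{r'}$ at $r' = r_\ast$, which yields the $18 d^{3/2}$ term; the direct estimate is only ever run in the regime $r \leq r_\ast$, where $(2\sqrt{e}\,r)^{d-i} \leq d^{-(d-i)/2}$ tames the binomial sum. Without this reduction your route cannot reach the stated bound for, say, $r = 1$.
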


We mention that it is not possible to remove the dimension dependence in the above estimate: there exists a $1$-parallel set whose Gaussian surface area is of order $d^{1/4}$. This follows from a simple observation: every set of the form $K^c$ (complement of $K$), where $K$ is open and convex, is $r$-parallel for every $r>0$. It is easy to verify that any closed halfspace $H$ is $r$-parallel for every $r>0$ (in fact $H=A_r$ for an appropriate halhspace $A$). Since every open convex set $K$ is of the form $K = \bigcap_{i \in I} H_i$ for some family of open halfspaces $(H_i)_{i \in I}$, we have 
\[
	 K^c = \bigcup_{i \in I} H_i^c = \bigcup_{i \in I} (A_i)_r  = \Big( \bigcup_{i \in I} A_i \Big)_r,   
\]   
where sets $A_i$ satisfy $(A_i)_r = H_i^c$ (note that $H_i^c$ are closed halfspaces). According to the result of Nazarov from \cite{N03}, there exists a convex set $K$ such that $\gamma_d^+(\partial K) \geq 0.28 \cdot d^{1/4}$.

\section{Proofs}

We first prove Theorem \ref{thm:1}.

\begin{proof}[Proof of Theorem \ref{thm:1}]
According to the result of Fradelizi and Marsiglietti from \cite{FM14} (Proposition 2.1), for any compact set $A$ in $\R^d$ and any compact convex set $K$ in $\R^d$ the function
\[
	(s,t) \ \longmapsto \ |sA+tK|
\]
is non-decreasing on $\R_+ \times \R_+$ in each coordinate. This is a consequence of the result of Stach\'o from \cite{S76}. Since for $r>0$ we have
$
	\frac{|A+rK|}{r^d} = |r^{-1}A + K|, 
$ 
the left hand side is non-increasing. Thus, for any $\e>0$ we have
\begin{align*}
	0 \geq & \frac{1}{\e}\left(\frac{|A+(r+\e)K|}{(r+\e)^d} - \frac{|A+rK|}{r^d} \right)  \\
	& = \frac{1}{(r+\e)^d} \cdot \frac{|A+(r+\e)K|- |A+rK|}{\e} + |A+rK| \cdot  \frac{\frac{1}{(r+\e)^d} - \frac{1}{r^d}}{\e}. 
\end{align*}
Taking $\e \to 0^+$ we arrive at
\[
	0 \geq \frac{|\partial_K A_{r,K}|_+}{r^d} - \frac{d}{r^{d+1}} |A_{r,K}|.
\]
The proof is completed.
\end{proof}

Theorem \ref{thm:1} implies Theorem \ref{thm:2}. For the proof we follow the strategy developed in \cite{J20}.

\begin{proof}[Proof of Theorem \ref{thm:2}]
Through the proof $c$ is a universal constant independent of the dimension, whose value may change from one line to the next. Note that for a measurable set $A$ we have
\begin{align*}
	\gamma_d(A) & = (2\pi)^{-\frac{d}{2}}\int_A e^{-|x|^2/2} \dd x  = (2\pi)^{-\frac{d}{2}}\int_A \int_{|x|}^\infty t e^{-t^2/2} \dd t \dd x \\
	& =    (2\pi)^{-\frac{d}{2}} \int_0^\infty \int_{\R^d}  t e^{-t^2/2} \1_{\{|x| \leq t\}} \1_A  \dd x \dd t  = (2\pi)^{-\frac{d}{2}} \int_0^\infty   t e^{-t^2/2} |A \cap t B_2^d | \dd t.
\end{align*}
Let us fix $\e_0>0$ and take $0<\e<\e_0$. Let $A^t=A \cap (t+r+\e_0)B_2^d$. We have
\begin{align*}
	\gamma_d(A_{r+\e})-\gamma_d(A_r) & = (2\pi)^{-\frac{d}{2}} \int_0^\infty   t e^{-t^2/2} \ |(A_{r+\e} \setminus A_r) \cap t B_2^d| \ \dd t \\
	& = (2\pi)^{-\frac{d}{2}} \int_0^\infty   t e^{-t^2/2} \ |((A^t)_{r+\e} \setminus (A^t)_r) \cap t B_2^d| \ \dd t \\
	& \leq (2\pi)^{-\frac{d}{2}} \int_0^\infty   t e^{-t^2/2} \ |(A^t)_{r+\e} \setminus (A^t)_r | \ \dd t.
\end{align*}
Dividing by $\e$, taking the limit $\e \to 0^+$ and applying Theorem \ref{thm:1} gives
\begin{align*}
	\gamma_d^+(\partial A_r) & \leq (2\pi)^{-\frac{d}{2}} \int_0^\infty   t e^{-t^2/2} \ |\partial (A^t)_r |_+ \ \dd t \leq (2\pi)^{-\frac{d}{2}}  \cdot \frac{d}{r}\int_0^\infty   t e^{-t^2/2} \ |(A^t)_r | \ \dd t \\
	& \leq (2\pi)^{-\frac{d}{2}}  \cdot \frac{d}{r}\int_0^\infty   t e^{-t^2/2} \ |(t+2r+\e_0) B_2^d | \ \dd t \\ & = \frac{|B_2^d|}{(2\pi)^{\frac{d}{2}}}  \cdot \frac{d}{r}\int_0^\infty   t e^{-t^2/2} \ (t+2r+\e_0)^d \dd t.  
\end{align*}
Taking the limit $\e_0 \to 0^+$ yields
\[
	\gamma_d^+(\partial A_r) \leq \frac{|B_2^d|}{(2\pi)^{\frac{d}{2}}}  \cdot \frac{d}{r}\int_0^\infty   t e^{-t^2/2} \ (t+2r)^d \dd t.  
\]
For $p>-1$ we have 
\[
	m_{p}:=\int_0^\infty e^{-t^2/2} t^p \dd t = 2^{\frac{p-1}{2}} \Gamma \left(\frac{p+1}{2}\right).
\]  
We also have $|B_2^d| = \frac{\pi^{d/2}}{\Gamma(\frac{d}{2}+1)}$. As a consequence $m_{d+1} = 2^{\frac{d}{2}} \Gamma \left(\frac{d}{2}+1\right) = (2\pi)^{\frac{d}{2}}/|B_2^d|$ and thus
\begin{align*}
	\gamma_d^+(\partial A_r) & \leq \frac{d}{r} \cdot \frac{1}{m_{d+1}} \cdot \int_0^\infty   t e^{-t^2/2} \ (t+2r)^d \dd t = \frac{d}{r} \cdot \frac{1}{m_{d+1}} \cdot   \sum_{i=0}^d {d \choose i} m_{i+1} (2r)^{d-i}  \\
	& = \frac{d}{r}  \cdot   \sum_{i=0}^d {d \choose i} \frac{\Gamma(\frac{i}{2}+1)}{\Gamma(\frac{d}{2}+1)} 2^{\frac{i-d}{2}}  (2r)^{d-i} .
\end{align*}   
 Using the standard bounds 
 \[
 	\sqrt{2\pi x} x^x e^{-x} \leq \Gamma(x+1) \leq 2\sqrt{2\pi x} x^x e^{-x}, \qquad x \in [1,\infty) \cup \{1/2\}.
 \] 
 and ${d \choose i} \leq \frac{d^{d-i}}{(d-i)!}$
  we get
  \begin{align*}
   \sum_{i=0}^d {d \choose i} \frac{\Gamma(\frac{i}{2}+1)}{\Gamma(\frac{d}{2}+1)} 2^{\frac{i-d}{2}}  (2r)^{d-i} & \leq  2 \sum_{i=0}^d \frac{d^{d-i}}{(d-i)!}  \cdot \frac{(i/2)^{i/2} e^{-i/2}}{(d/2)^{d/2} e^{-d/2}} (\sqrt{2} r)^{d-i} \\
   & = 2 \sum_{i=0}^d \frac{d^{d-i}}{(d-i)!}  \cdot \frac{i^{i/2} }{d^{d/2}} (2 \sqrt{ e} r)^{d-i}.  
  \end{align*}
Let us now assume that $r \leq r_\ast := \frac{d^{-1/2}}{2 \sqrt{e}}$. Then
\begin{align*}
2 \sum_{i=0}^d \frac{d^{d-i}}{(d-i)!}  \cdot \frac{i^{i/2} }{d^{d/2}} (2 \sqrt{ e} r)^{d-i} & \leq 2 \sum_{i=0}^d \frac{d^{d-i}}{(d-i)!}  \cdot \frac{i^{i/2} }{d^{d/2}} \cdot  \frac{1}{d^{\frac{d-i}{2}}} \\ &   = 2 \sum_{i=0}^d \frac{1}{(d-i)!} \cdot \left(\frac{i}{d}\right)^{i/2}  \leq  2 \sum_{i=0}^d \frac{1}{(d-i)!} \leq 2e .  
\end{align*}
Therefore, for $r \leq r_\ast$ we get $\gamma_d^+(\partial A_r) \leq \frac{2de}{r} $.

If $r>  r_\ast$ one can use the bound for $r = r_\ast$, since every $r$-parallel set is $r'$ parallel for every $r'<r$. Thus in this case we get $\gamma_d^+(\partial A_r) \leq 4 e^{3/2} d^{3/2} < 18 d^{3/2}$.  We proved that always $\gamma_d^+(\partial A_r) \leq \max(18 d^{3/2},\frac{2de}{r}) \leq 18d \max(\sqrt{d},\frac{1}{r})$.  
\end{proof}

\vspace{1cm}  
  
\noindent Institute of Mathematics \\ University of Warsaw\\ Banacha 2, 02-097, Warsaw, Poland \\
email: nayar@mimuw.edu.pl

\end{document}